\title{Multiplicative formality of operads and \\
Sinha's spectral sequence for long knots}
\author{Syunji Moriya}
\date{}
\theoremstyle{definition}
\newtheorem{defi}{Definition}[section]
\newtheorem{rem}[defi]{Remark}
\theoremstyle{plain}
\newtheorem{prop}[defi]{Proposition}
\newtheorem{lem}[defi]{Lemma}
\newtheorem{thm}[defi]{Theorem}
\newtheorem{cor}[defi]{Corollary}
\newcommand{\M}{\mathcal{M}}
\newcommand{\CHP}{\mathcal{CH}_{\geq 0}}
\newcommand{\OPER}{\mathcal{OPER}}
\newcommand{\SEQ}{\mathcal{SEQ}}
\newcommand{\oper}{\mathcal{O}}
\newcommand{\A}{\mathcal{A}}
\newcommand{\B}{\mathcal{B}}
\newcommand{\K}{\mathcal{K}}
\newcommand{\aoper}{\mathcal{P}}
\newcommand{\taoper}{\tilde{\mathcal{P}}}
\newcommand{\F}{\mathcal{F}}
\newcommand{\Hom}{H_*}
\newcommand{\Ho}{\mathbf{Ho}}
\newcommand{\W}{\mathcal{W}}
\newcommand{\U}{\mathcal{U}}
\newcommand{\tot}{\operatorname{Tot}}
\newcommand{\seq}{\mathcal{S}}
\newcommand{\field}{\mathbf{k}}
\newcommand{\Embbar}{\overline{Emb}}
\newcommand{\R}{\mathbb{R}}
\newcommand{\Poiss}{\mathrm{Poiss}}
\newcommand{\Vin}{V_{in}}
\newcommand{\In}{\mathrm{In}}
\newcommand{\T}{\mathcal{T}}
\begin{document}

\maketitle
\begin{center}
\vspace{-2mm}

\textit{MSC 2010 : 55U35\\
\vspace{1mm}
Keywords : operads, model category, long knots}
\end{center}
\begin{abstract}
Lambrechts, Turchin and Voli\'c \cite{LTV} proved the Bousfield-Kan type rational homology spectral sequence associated to the $d$-th Kontsevich operad collapses at $E^2$-page if $d\geq 4$.  The key of their proof is formality of the operad. 
In this paper, we simplify their proof using a model category of operads.  As byproducts  we obtain two new consequences. One is collapse of the spectral sequence in the case of $d=3$ (and the coefficients being rational numbers). The other says there is no non-trivial extension for the Gerstenhaber algebra structure on the spectral sequence.
\end{abstract}

\section{Introduction}
The $d$-th Kontsevich operad $\K_d$ is defined  as a certain compactification of the configuration space of ordered points in $\R^d$ for each $d\geq 1$ (see \cite{sinha}). It is weak equivalent to the little $d$-cubes operad, but it has the technical advantage that it admits a morphism of non-$\Sigma$ operads from the associative operad. So we may consider the associated cosimplicial space $\K_d^\bullet$ via the construction  of Gerstenhaber-Voronov \cite{GV} and McClure-Smith \cite{MS}. 
  Sinha \cite{sinha} proved the homotopy totalization of $\K_d^\bullet$ is weak homotopy equivalent to the space of 'long knots modulo immersions' $\Embbar_d$ if $d\geq 4$ (see \cite{sinha} or \cite{LTV} for the definition).  He also proved the Bousfield-Kan type homology spectral sequence associated to $\K_d^\bullet$ converges to the homology of  $\Embbar_d$ if $d\geq 4$. We simply call this spectral sequence Sinha's spectral sequence.\par
Lambrechts, Turchin, and Voli\'c \cite{LTV} proved Sinha's spectral sequence with rational coefficients collapses at $E^2$-page if $d\geq 4$. As the $E^2$-page is isomorphic to the Hochschild cohomology of the Poisson operad of degree $d-1$, we get a good algebraic presentation of the homology of $\Embbar_d$ by this collapse. The key of their proof is formality of the Kontsevich operad.\par
 The main purpose of this paper is to simplify their proof  using Quillen's theory of model categories. As byproducts we obtain some new consequences (the case of $d=3$ in Theorem \ref{Tcollapse} and Corollaries \ref{Cextension}, \ref{Cembbar}). To explain the situation more precisely, we prepare some notations and  terminologies. In the rest of the paper, an operad means a non-$\Sigma$ operad. Let $\CHP$ denote the category of non-negatively graded chain complexes over a fixed field $\field$ (with differentials decreasing degree) and $\OPER$ be the category of operads over $\CHP$. Let $\A\in\OPER$ denote the associative operad.

\begin{defi}\label{Dformality}
A morphism $f:\oper\to\aoper\in\OPER$ is called  \textit{a weak equivalence} if the chain map $f_n:\oper(n)\to\aoper(n)$ at each arity $n\geq 0$ is a quasi-isomorphism.  For an operad $\oper\in\OPER$ we define an operad $H_*(\oper)$ as follows: We put $H_*(\oper)(n)=H_*(\oper(n))$, where the right hand side is the usual homology group considered as a complex with the zero differential. The composition of $H_*(\oper)$ is induced by that of $\oper$. The construction $\oper\mapsto H_*(\oper)$ is natural for a morphism of  operads. We say a morphism  $f:\oper\to\aoper\in\OPER$ is \textit{relatively formal} if there exists a chain of commutative squares in $\OPER$:
\[
\xymatrix
{\oper\ar[d]_f&\oper_1\ar[d]\ar[l]\ar[r]&\cdots&\oper_N\ar[d]\ar[l] \ar[r]&  H_*(\oper)\ar[d] ^{H_*(f)}\\
\aoper &\aoper_1\ar[l]\ar[r] &\cdots & \aoper_N\ar[l]\ar[r] & H_*(\aoper)
}
\]
where each horizontal arrow is a weak equivalence. A \textit{multiplicative operad} is an operad $\oper$ equipped  with a morphism $\A\to \oper$. A morphism of multiplicative operads is a morphism of operads under $\A$. We say a multiplicative operad $f:\A\to\oper\in\OPER$ is \textit{multiplicatively formal} if it is relatively formal and one can take a chain of commutative squares connecting $f$ and $H_*(f)$  such that each horizontal morphism between sources is the identity (under the canonical identification $H_*(\A)=\A$).
\end{defi}

Let $C_*(\K_d)$ denote the chain operad of the Kontsevich operad with $\field$-coefficients.  A fixed linear embedding $\R\to \R^d$ induces a morphism $\K_1\to \K_d$ of operads. Composing this morphism with the morphism from the (topological) associative operad $\A^{top}$ to $\K_1$ which takes the unique point  to the configuration whose numbering is consistent with the order of $\R$, we obtain   a morphism $\A^{top}\to \K_d$. This morphism naturally induces a morphism $\A\to C_*(\K_d)$ in $\OPER$ and we regard $C_*(\K_d)$ as a multiplicative operad with this morphism. The following theorem is a special case of the main theorem of Lambrechts and Voli\'c \cite{LV}. 
\begin{thm}[\cite{LV} ]\label{Trelativeformality}
When $d\geq 3$ and $\field=\R$, the morphism $\A\to C_*(\K_d)$ is relatively formal. \qed
\end{thm}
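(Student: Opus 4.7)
The plan is to follow Kontsevich's original strategy as implemented by Lambrechts--Vol\'ic in~\cite{LV} for the absolute case and refined in~\cite{LTV} for the relative case. The underlying framework is the semi-algebraic de Rham theory for operads (Kontsevich--Soibelman, Hardt--Lambrechts--Turchin--Voronov), which provides a commutative cochain model $\Omega^*_{\mathrm{PA}}(\K_d)$ that is Poincar\'e-dual to $C_*(\K_d)$ as an operad. This is crucial because formality is almost never accessible in the non-commutative category of chain complexes directly.

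First I would introduce Kontsevich's operad $\mathcal{D}_d$ of admissible diagrams and construct two operad morphisms out of it: a graph integration morphism $I:\mathcal{D}_d \to \Omega^*_{\mathrm{PA}}(\K_d)$ defined by pulling back a chosen propagator (a closed $(d-1)$-form on $\K_d(2)$ representing the generator of $H^{d-1}$) along the edge projections and integrating over the fibres of the Fulton--MacPherson forgetful map, together with a ``cohomological projection'' $\pi : \mathcal{D}_d \to H^*(\K_d)$ picking out the top-degree part of each graph. Stokes' theorem identifies the defect in $I$ being a cochain map with integrals over boundary strata; once one checks that the only surviving contributions are the operadic composition relations, both $I$ and $\pi$ are operadic quasi-isomorphisms for $d \geq 2$, and dualising yields a zigzag of operad quasi-isomorphisms connecting $C_*(\K_d)$ to $H_*(\K_d)$.

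For the relative version I would next ensure that this zigzag is compatible with the structural morphism $\A \to C_*(\K_d)$, so that on the other end it reduces to the canonical morphism $\A = H_*(\A) \to H_*(\K_d)$. Since the image of $\A$ in $\K_d$ consists of collinear configurations, choosing a propagator with appropriate (anti-)symmetry properties forces the restriction of $I$ and $\pi$ to the sub-operad of ``associative'' graphs in $\mathcal{D}_d$ to recover this canonical identification. Producing the system of commutative squares demanded by Definition~\ref{Dformality} then reduces to a functorial check at each intermediate stage, using the cofibrancy of the associative operad to rigidify the zigzag.

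The main obstacle is the vanishing of the anomalous boundary integrals on the ``hidden'' faces of $\K_d(n)$ where several interior points collide. A standard dimension count shows that the integrands have degree strictly greater than the dimension of the face precisely when $d \geq 3$, which is exactly where the hypothesis enters; for $d = 2$ the anomaly is genuine (it is the source of Kontsevich's associator anomaly) and one must replace this strategy by a different approach, such as Tamarkin's via a Drinfeld associator.
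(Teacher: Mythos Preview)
The paper does not give its own proof of this theorem: it is stated with a \qed and attributed to \cite{LV} and \cite{LTV}, with only a one-sentence gloss (``proved by Lambrechts and Voli\'c verifying the quasi-isomorphisms sketched by Kontsevich commute with the morphisms of operads''). Your plan is therefore not competing with any argument in the paper but rather sketching the content of the cited references, and in broad outline it does so correctly: the Kontsevich graph complex $\mathcal{D}_d$, the integration map $I$ into semi-algebraic forms, and the projection to cohomology are exactly the ingredients of \cite{LV}.

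Two points, however, are off. First, your remark about ``using the cofibrancy of the associative operad to rigidify the zigzag'' is both incorrect and unnecessary: $\A$ is not cofibrant in the model structure of Theorem~\ref{Tmodel} (the paper explicitly takes a cofibrant replacement $\B_0\to\A$ in the proof of Theorem~\ref{Tmultiplicativeformality}), and relative formality as in Definition~\ref{Dformality} does not require the top row to consist of identities---only of weak equivalences. Passing from relative to multiplicative formality is precisely the content of Theorem~\ref{Tmultiplicativeformality}, which the paper proves with the model-categorical machinery; you should not fold that step into the proof of Theorem~\ref{Trelativeformality}.

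Second, your explanation of where the hypothesis $d\geq 3$ enters is misplaced. Kontsevich's vanishing arguments for the hidden faces (a mixture of degree counts and his involution trick) already work for $d\geq 2$, and \cite{LV} establishes absolute formality in that range; there is no ``genuine anomaly'' at $d=2$ in the operadic formality statement. The restriction $d\geq 3$ in the relative statement comes instead from the condition $n\geq 2m+1$ (here $m=1$) in the relative formality theorem of \cite{LV}, which governs when the zigzag for $\K_m$ can be made strictly compatible with that for $\K_n$.
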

(More precisely speaking,  in \cite{LV} the relative formality of Fulton-MacPherson operad is proved but  Theorem \ref{Trelativeformality}  immediately follows from it using the diagram (2.5) of \cite{LTV})\\
\indent The usual (absolute) formality of $C_*(\K_d)$ (or little $d$-cubes operad) was proved first by  Tamarkin \cite{tamarkin} for $d=2$ and later by Kontsevich \cite{kontsevich} for general $d$ (see \cite{LV} and \cite{HLTV} for detailed descriptions of Kontsevich's proof). The relative version in the above theorem was proved by Lambrechts and Voli\'c \cite{LV} verifying the quasi-isomorphisms sketched by Kontsevich commute with the morphisms of operads. But their diagram does not show the multiplicative formality of Kontsevich operads as it contains the chain operad of Stasheff's associahedra. \\ 
\indent To obtain the collapse from Theorem \ref{Trelativeformality}, 
the authors of \cite{LTV} introduced a partial generalization of  the construction of  Gerstenhaber-Voronov and McClure-Smith applicable to any morphism of operads. Though it is a very general construction and should have other applications,  their proof  is somewhat complicated and does not work for $d=3$.
On the other hand, as pointed out in \cite{LTV}, if $C_*(\K_d)$ is multiplicatively formal, the collapse easily follows from it. We  prove this is true (the proof is given in section \ref{Sformality}) :

\begin{thm}\label{Tmultiplicativeformality}
When $d\geq 3$ and $\field=\R$, $C_*(\K_d)$ is multiplicatively formal.
\end{thm}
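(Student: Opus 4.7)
The plan is to promote the zig-zag guaranteed by relative formality (Theorem~\ref{Trelativeformality}) to a zig-zag of weak equivalences in the undercategory $\A\downarrow\OPER$ of operads under $\A$, which is precisely multiplicative formality. The tool is the model category structure on $\OPER$ together with its behavior under passage to undercategories.

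First I would equip $\OPER$ with a cofibrantly generated model structure in which weak equivalences are arity-wise quasi-isomorphisms and fibrations are arity-wise surjections, as available for non-$\Sigma$ operads in $\CHP$. Since $\A$ itself may not be cofibrant, I take a cofibrant replacement $q:\tilde{\A}\to\A$ (a natural choice being the $A_\infty$-operad). From the relative formality zig-zag, with top row $\A=\A_0,\A_1,\ldots,\A_N,\A_{N+1}=\A$ and bottom row $\aoper_0=C_*(\K_d),\aoper_1,\ldots,\aoper_N,\aoper_{N+1}=H_*(\K_d)$, I would inductively construct maps $\tilde{\A}\to\A_i$ compatible with every weak equivalence in the top row, and with the endpoint maps $\tilde{\A}\to\A_0$ and $\tilde{\A}\to\A_{N+1}$ both equal to $q$. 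Because $\tilde{\A}$ is cofibrant, the required lifts exist after factoring each ``backwards'' weak equivalence in the top row as a cofibration followed by a trivial fibration. This produces a zig-zag of weak equivalences in $\tilde{\A}\downarrow\OPER$ whose endpoints are $(\tilde{\A}\xrightarrow{f\circ q}C_*(\K_d))$ and $(\tilde{\A}\xrightarrow{H_*(f)\circ q}H_*(\K_d))$.

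The final step is to transfer this zig-zag to $\A\downarrow\OPER$ by forming the pushouts $\aoper_i':=\A\coprod_{\tilde{\A}}\aoper_i$ along $q$. Provided the maps $\tilde{\A}\to\aoper_i$ are cofibrations (which can be arranged by an auxiliary factorization) and $\OPER$ is left proper, the pushouts are homotopy pushouts along the weak equivalence $q$, so $\aoper_i\xrightarrow{\sim}\aoper_i'$. The resulting zig-zag lives in $\A\downarrow\OPER$, and at the endpoints the pushouts are naturally weakly equivalent to $C_*(\K_d)$ and $H_*(\K_d)$ as operads under $\A$, yielding the required zig-zag for multiplicative formality.

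The main obstacle is the coherent inductive lifting in the second step: the maps $\tilde{\A}\to\A_i$ must be compatible not only with each adjacent weak equivalence in the top row but globally across the whole zig-zag, which requires iterated factorizations (possibly enlarging $N$) and careful use of the small object argument. A secondary issue is verifying left properness of the chosen model structure on $\OPER$, or otherwise arranging that the pushouts in the final step compute the homotopy pushouts.
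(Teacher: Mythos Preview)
Your overall architecture---replace $\A$ by a cofibrant $A_\infty$-operad $\tilde\A$, work in $\tilde\A/\OPER$, then push out along $q:\tilde\A\to\A$ using left properness---is exactly the shape of the paper's argument (Theorem~\ref{Tmodel} and Proposition~\ref{Pleftproper}). The gap is in your second step, and it is not a technicality that ``iterated factorizations'' or the small object argument will fix.

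The problem is the claim that the lifts $\tilde\A\to\A_i$ can be chosen so that \emph{both} endpoint maps $\tilde\A\to\A_0=\A$ and $\tilde\A\to\A_{N+1}=\A$ equal $q$. Starting from $q$ at the left and lifting through the top row produces some $\tilde q_{N+1}:\tilde\A\to\A$, but there is no reason this is $q$, or even homotopic to $q$. Indeed $[\tilde\A,\A]\cong\field^\times$ (each weak equivalence between $A_\infty$-operads contributes a scalar, and the composite across the zig-zag may be any $a\in\field^\times$), so in general $\tilde q_{N+1}\simeq a*q$ with $a\neq 1$, and no choice of lifts will repair this. Consequently the zig-zag you build in $\tilde\A/\OPER$ ends at $(\tilde\A\xrightarrow{a*q}\A\to H_*(\K_d))$, not at $(\tilde\A\xrightarrow{q}\A\to H_*(\K_d))$, and after pushing out you do not recover $H_*(\K_d)$ with its given $\A$-structure.

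The paper sidesteps coherence entirely. It picks \emph{arbitrary} maps $f_i:\B_0\to\B_i$ (no compatibility required) and proves directly that any two such choices yield isomorphic objects of $\Ho(\B_0/\OPER)$: this is Lemma~\ref{Lhomotopyclass}. Part~(1) is the computation $[\B_0,\B_1]\cong\field^\times$; part~(2) observes that the discrepancy $a\in\field^\times$ is absorbed by the explicit operad automorphism $\phi_a:\oper\to\oper$, $(\phi_a)_n=a^{n-1}$. That rescaling isomorphism is the missing idea in your argument; once you have it, you no longer need coherent lifts at all, and the proof becomes the short paragraph at the end of Section~\ref{Sformality}.
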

When $d$ is equal to or greater than $4$, the following theorem is the main result of \cite{LTV}.
\begin{thm}\label{Tcollapse}
For $d\geq 3$, Sinha's spectral sequence with rational coefficients collapses at $E^2$-page. 
\end{thm}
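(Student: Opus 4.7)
The strategy is to deduce the collapse directly from multiplicative formality (Theorem \ref{Tmultiplicativeformality}), realizing the observation already noted in \cite{LTV} that multiplicative formality suffices. Because Theorem \ref{Tmultiplicativeformality} covers $d \geq 3$ uniformly, this argument treats the new case $d = 3$ alongside the previously known cases $d \geq 4$. First, I would organize Sinha's spectral sequence as a functor from multiplicative operads in $\CHP$ to spectral sequences: any multiplicative $\A \to \oper$ yields a cosimplicial chain complex $\oper^\bullet$ via the Gerstenhaber-Voronov/McClure-Smith construction, and filtering its total complex by cosimplicial degree produces a Bousfield-Kan-type spectral sequence; applied to $C_*(\K_d)$ this recovers Sinha's spectral sequence with $\R$-coefficients.

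Next, I would apply Theorem \ref{Tmultiplicativeformality} to obtain a zigzag of multiplicative weak equivalences from $C_*(\K_d)$ to $H_*(C_*(\K_d))$. After passing each operad in the zigzag to a suitable Reedy-type resolution, every arrow induces a levelwise quasi-isomorphism of cosimplicial chain complexes, hence an isomorphism of associated spectral sequences from $E^1$ onward. But the cosimplicial chain complex $H_*(C_*(\K_d))^\bullet$ has zero internal differential: regarded as a double complex its only nonzero differential is the cosimplicial one, so its spectral sequence collapses at $E^2$ tautologically. Transporting this back through the zigzag yields the collapse of Sinha's spectral sequence over $\R$.

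To descend from $\R$- to $\Q$-coefficients I would use that Sinha's spectral sequence commutes with extension of scalars: each page of the $\R$-version is the $\Q$-version tensored with $\R$, and a $\Q$-linear differential that vanishes after $-\otimes_\Q \R$ is already zero, since $\Q \hookrightarrow \R$ is faithfully flat.

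The main obstacle is the functoriality/homotopy-invariance step in the second paragraph: one must show that a multiplicative weak equivalence of operads in $\CHP$ induces an isomorphism on the associated Bousfield-Kan-type spectral sequence. This requires a model-categorical framework on multiplicative operads in which the cosimplicial chain complex construction $\oper\mapsto \oper^\bullet$ is homotopy invariant, together with care about Reedy-type resolutions so the zigzag of spectral sequences transports correctly. This is presumably the technical heart of what the paper sets up via Quillen's theory, and is what makes the proof simpler and more uniform than the approach of \cite{LTV}, which had to work directly at the cochain level and encountered obstructions preventing the case $d=3$.
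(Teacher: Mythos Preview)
Your overall strategy matches the paper's exactly: deduce collapse from Theorem~\ref{Tmultiplicativeformality} via naturality of the Gerstenhaber--Voronov/McClure--Smith construction, then observe that the spectral sequence for $H_*(\K_d)^\bullet$ collapses at $E^2$ because the internal differentials vanish. Your descent from $\R$ to $\Q$ via faithful flatness is correct and fills in a detail the paper leaves implicit.

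However, you have overcomplicated the functoriality step and, more importantly, misidentified where the model-categorical work lies. The step you call the ``main obstacle'' is in fact immediate and requires no Reedy resolutions or homotopy-invariance machinery. The cosimplicial object attached to $\A\to\oper$ has $\oper^n=\oper(n)$, and the construction is \emph{strictly} functorial on $\A/\OPER$. A multiplicative weak equivalence is by definition a morphism in $\A/\OPER$ that is a quasi-isomorphism at each arity, so applying the functor directly yields a morphism of cosimplicial chain complexes that is a quasi-isomorphism at each cosimplicial degree---and such a morphism induces an isomorphism on $E^2$. That is the whole proof of Theorem~\ref{Tcollapse} in the paper; it is two sentences long.

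The Quillen theory is deployed not here but in the proof of Theorem~\ref{Tmultiplicativeformality}, where left properness of $\OPER$ (Theorem~\ref{Tmodel}) and the analysis of maps out of cofibrant $A_\infty$-operads (Lemma~\ref{Lhomotopyclass}) are used to upgrade the \emph{relative} formality of \cite{LV} to \emph{multiplicative} formality. That upgrade is the genuine technical heart, and it is what the approach of \cite{LTV} lacked; once it is in hand, the collapse is a formality in the colloquial sense.
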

\begin{proof}
By Theorem \ref{Tmultiplicativeformality}, there is a diagram of the following form:
\[
\xymatrix
{\A\ar[d]&\A\ar[d]\ar@{=}[l]\ar@{=}[r]&\cdots&\A\ar[d]\ar@{=}[l] \ar@{=}[r]&  \A\ar[d] \\
C_*(\K_d) &\aoper_1\ar[l]\ar[r] &\cdots & \aoper_N\ar[l]\ar[r] & H_*(\K_d)
}
\]
where each horizontal arrow is a weak equivalence.
As the construction of Gerstenhaber-Voronov and McClure-Smith is natural for morphisms of multiplicative operads, this diagram induces a chain of termwise quasi-isomorphisms of cosimplicial chain complexes as follows:
\[
\xymatrix{
C_*(\K_d^\bullet)
&
\aoper_1^\bullet \ar[l]\ar[r]
&
\cdots
&
\aoper_N^\bullet\ar[l]\ar[r] 
& 
H_*(\K_d^\bullet).}
\]
Here, $\aoper^\bullet$ denotes the cosimplicial chain complex associated to an operad $\aoper$, and a termwise quasi-isomorphism  is a morphism which induces a quasi-isomorphism at each cosimplicial degree.
In turn, this chain induces a chain of homomorphisms of spectral sequences
\[
\xymatrix{
E^r_{*,*}(C_*(\K_d^\bullet))
&
E^r_{*,*}(\aoper_1^\bullet ) \ar[l]\ar[r]
&
\cdots
&
E^r_{*,*}(\aoper_N^\bullet )\ar[l]\ar[r] 
& 
E^r_{*,*}(H_*(\K_d^\bullet) )}
\]
for each $r\geq 0$.
As all differentials of $H_*(\K^\bullet_d)$ are zero at each cosimplicial degree, the spectral sequence $\{E^r_{*,*}(H_*(\K^\bullet_d))\}_{r\geq 0}$ collapses at $E^2$-page. As a termwise quasi-isomorphism of cosimplicial complexes induces an isomorphism between $E^2$-pages, all arrows in the above chain are isomorphisms for each $r\geq 2$.  Thus we see the spectral sequence $\{E^r_{*,*}(C_*(\K^\bullet_d))\}_{r\geq 0}$ collapses at $E^2$-page.
\end{proof}
\begin{rem}
When $d=3$, it is \textit{not} known whether the Sinha's spectral sequence converges to the homology of $\Embbar_3$ but it is still worth  studying as its $E^2$-page is isomorphic to the $E^1$-page of Vassiliev's spectral sequence for long knots modulo immersions in $\R^3$ (see Turchin \cite{T}). In particular, its diagonal part (the part of total degree zero) is isomorphic to the space of all finite type invariants of framed long knots. See also Voli\'c \cite{V} for identification between the diagonal and invariants using Sinha's cosimplicial model and Bott-Taubes integral.\\
\indent   In \cite{LTV}, the authors deduce the collapse of Vassiliev's spectral sequence which converges to the homology of the space of long knots in $\R^d$ from collapse of Sinha's spectral sequence for each $d\geq 4$.  A similar argument does not seem to work for $d=3$ because in this case these two spectral sequences are not known to converge to the same module. 
\end{rem}
Besides simplification of the proof, Theorem \ref{Tmultiplicativeformality} has an immediate application to the multiplicative structure on the spectral sequence. 
The Hochschild cohomology $HH^*(\oper)$ of a chain multiplicative operad $\oper$ is by definition the homology of the total complex of the associated cosimplicial chain complex $\oper^\bullet$.  $HH^*(\oper)$ carries a natural Gerstenhaber algebra structure whose product and Lie bracket are defined similarly to those on the Hochschild cohomology of an associative algebra (see \cite{GV, salvatore}). 
\begin{cor}\label{Cextension}
When $\field=\R$ and $d\geq 3$, there exists an isomorphism of Gerstenhaber algebras: 
\[
 HH^*(C_*(\K_d))\cong HH^*(\Poiss_{d-1}).
\]
Here, $\Poiss_{d-1}$ is the Poisson operad of degree $d-1$ (see \cite[Definition 4.10]{sinha}).
\end{cor}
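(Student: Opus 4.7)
The plan is to upgrade the zigzag of weak equivalences supplied by Theorem \ref{Tmultiplicativeformality} to a zigzag of Gerstenhaber algebra isomorphisms on Hochschild cohomology. Unpacking Definition \ref{Dformality} in the multiplicative case, Theorem \ref{Tmultiplicativeformality} provides a chain
\[
C_*(\K_d) \leftarrow \oper_1 \to \cdots \leftarrow \oper_N \to H_*(\K_d) = \Poiss_{d-1}
\]
in the under category $\A/\OPER$ in which every horizontal arrow is a weak equivalence of operads and is the identity on the $\A$-summand. Applying the Gerstenhaber-Voronov/McClure-Smith construction, which is a functor from $\A/\OPER$ to cosimplicial chain complexes, yields a zigzag of cosimplicial objects whose arrows are quasi-isomorphisms in every cosimplicial degree.

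I would then pass to total complexes and cohomology. Exactly as in the proof of Theorem \ref{Tcollapse}, a levelwise quasi-isomorphism of cosimplicial complexes induces an isomorphism on the $E^2$-page of the associated spectral sequence, and hence on $HH^*$; convergence is not an issue since every operad considered lies in $\CHP$. Composing these isomorphisms gives an isomorphism of graded vector spaces $HH^*(C_*(\K_d)) \cong HH^*(\Poiss_{d-1})$.

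The remaining point is to verify that each of these isomorphisms respects the Gerstenhaber algebra structure. This is automatic: as recalled in \cite{GV, salvatore}, the cup product and the Gerstenhaber bracket on $HH^*(\oper)$ are defined by explicit formulas involving only the operadic composition and the chosen multiplication $\A \to \oper$, so $\oper \mapsto HH^*(\oper)$ is naturally a functor from $\A/\OPER$ to Gerstenhaber algebras. The only conceptually delicate point, and the reason \emph{multiplicative} formality is required rather than merely relative formality, is that the horizontal maps in the zigzag must commute with the multiplication on the nose -- which is precisely what Theorem \ref{Tmultiplicativeformality} guarantees. I expect this last observation to be the crux of the argument; once it is granted, the corollary is a routine bookkeeping exercise.
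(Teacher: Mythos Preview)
Your proposal is correct and follows exactly the line the paper intends: the paper states just before the corollary that $\oper\mapsto HH^*(\oper)$ is natural for morphisms in $\A/\OPER$, observes that $H_*(\K_d)=\Poiss_{d-1}$, and then simply appends a \qed, leaving the reader to unpack precisely the zigzag-plus-naturality argument you have written out. Your version is a faithful expansion of that one-line proof, with the added (and appropriate) remark that levelwise quasi-isomorphisms of cosimplicial complexes induce isomorphisms on $HH^*$.
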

\begin{proof}
 As the construction of the Gerstenhaber algebra from a multiplicative operad is natural for morphisms of multiplicative operads,  the chain of termwise quasi-isomorphisms in the proof of Theorem \ref{Tcollapse} induces an isomorphism of Gerstenhaber algebras $HH^*(C_*(\K_d))\cong HH^*(H_*(\K_d))$. Since the operad $H_*(\K_d)$ is isomorphic to $\Poiss_{d-1}$ as a multiplicative operad, we have proved the corollary. 
\end{proof}
This corollary says there is no extension problem in Sinha's spectral sequence as the right hand side is isomorphic to the $E^2$-page with the induced operations. The utility of formality for the extension problem was pointed out by Salvatore \cite{salvatore} (but a proof was not given).\par
McClure and Smith \cite{MS} invented a topological version of  the above construction. For a topological multiplicative operad, they defined a little squares action on its (homotopy) totalization. In particular, for $d\geq 4$ the homology 
$H_*(\Embbar_d)\cong H_*(\widetilde{\tot}(\K_d^\bullet))$ carries an induced Gerstenhaber structure whose product and bracket are given by the Pontryagin product and  Browder operation. (Here $\widetilde{\tot}$ denotes the homotopy totalization i.e., the homotopy limit over the category of simplices $\Delta$.) We obtain an algebraic interpretation of this 'topological' Gerstenhaber algebra:
\begin{cor}\label{Cembbar}
When $\field=\R$ and $d\geq 4$, there exists an isomorphism of Gerstenhaber algebras:
\[
H_*(\Embbar_d)\cong HH^*(\Poiss_{d-1}).
\]
\end{cor}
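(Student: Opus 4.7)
The plan is to realize the desired isomorphism as a composite of three Gerstenhaber-algebra isomorphisms
\[
H_*(\Embbar_d)\ \cong\ H_*(\widetilde{\tot}(\K_d^\bullet))\ \cong\ HH^*(C_*(\K_d))\ \cong\ HH^*(\Poiss_{d-1}).
\]
The first is Sinha's weak equivalence $\widetilde{\tot}(\K_d^\bullet)\simeq\Embbar_d$ for $d\geq 4$, along which the McClure-Smith Gerstenhaber structure on the homology of the totalization is transferred to $H_*(\Embbar_d)$; this is precisely how the structure on $H_*(\Embbar_d)$ was introduced in the paragraph preceding the statement, so this step is tautological. The third is Corollary \ref{Cextension}, already established via Theorem \ref{Tmultiplicativeformality}. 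Hence the only genuine work lies in the middle isomorphism.

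To build and analyze the middle map I would use the natural Eilenberg-Zilber style comparison
\[
C_*(\widetilde{\tot}(\K_d^\bullet))\ \longrightarrow\ \tot(C_*(\K_d)^\bullet),
\]
whose target is the complex whose homology computes $HH^*(C_*(\K_d))$. This is a morphism of filtered complexes inducing the identity on $E^2$-pages of the two associated Bousfield-Kan type spectral sequences. Convergence of Sinha's spectral sequence in the range $d\geq 4$ then forces this comparison to be a quasi-isomorphism, giving on homology a linear isomorphism $H_*(\widetilde{\tot}(\K_d^\bullet))\xrightarrow{\cong}HH^*(C_*(\K_d))$. The remaining, and main, task is to upgrade this to a Gerstenhaber-algebra isomorphism.

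I expect this multiplicative compatibility to be the hard part, as it is essentially an instance of the Deligne conjecture for multiplicative operads: the topological little $2$-cubes action constructed by McClure-Smith on $\widetilde{\tot}(\K_d^\bullet)$ must be shown to induce, after passage to singular chains, the chain-level $E_2$-structure underlying the Gerstenhaber-Voronov cup product and Lie bracket on $HH^*(C_*(\K_d))$. I would settle this by tracing through the explicit operadic description of the little squares operad in \cite{MS}: its cellular structure is assembled from the same cosimplicial braces that produce the algebraic Gerstenhaber-Voronov operations, so the singular chains functor transports the topological action to the algebraic one up to natural chain homotopies that can be written down explicitly. Once this identification is in hand, the Pontryagin product and Browder operation on $H_*(\widetilde{\tot}(\K_d^\bullet))$ correspond to the cup product and Gerstenhaber bracket on $HH^*(C_*(\K_d))$, and composition with Corollary \ref{Cextension} delivers the asserted isomorphism of Gerstenhaber algebras.
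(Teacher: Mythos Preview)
Your decomposition into three isomorphisms is exactly the paper's approach, and your identification of the middle step $H_*(\widetilde{\tot}(\K_d^\bullet))\cong HH^*(C_*(\K_d))$ as the only substantive point is correct. The paper, however, does not carry out the comparison you sketch: it simply invokes \cite[Theorem~4.6]{sakai} or \cite[Proposition~22]{salvatore}, each of which already provides this isomorphism \emph{as Gerstenhaber algebras}, and then composes with Corollary~\ref{Cextension}. So the compatibility of the McClure--Smith $E_2$-action with the Gerstenhaber--Voronov operations on Hochschild cohomology---which you correctly flag as the delicate part and propose to extract by hand from \cite{MS}---is taken as a black box from the literature rather than reproved. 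Your outline of how one would establish it is reasonable in spirit, but the phrases ``up to natural chain homotopies that can be written down explicitly'' hide nontrivial work that Sakai and Salvatore actually perform; citing them is both shorter and safer than the sketch you give.
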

\begin{proof}
Combine Corollary \ref{Cextension} with Sakai \cite[Theorem 4.6]{sakai} or \cite[Proposition 22]{salvatore}.
\end{proof}
\begin{rem}
Songhafouo-Tsopm\'en\'e \cite{paul} also obtained the results stated above  independently and simultaneously.
\end{rem}
\section{Proof of Theorem \ref{Tmultiplicativeformality}}\label{Sformality}
Besides Theorem \ref{Trelativeformality}, the other key to the proof of Theorem \ref{Tmultiplicativeformality} is the following:
\begin{thm}[\cite{hinich,  spitzweck, BM, muro}]\label{Tmodel}
The category $\OPER$ of non-$\Sigma$-operads over $\CHP$ admits a left proper model category structure where
\begin{itemize}
\item  weak equivalences are those defined in Definition \ref{Dformality}, and 
\item fibrations are those morphisms $f:\oper\to \aoper$ such that  for each $n\geq 0$ and $k\geq 1$, the linear map  $f_{n,k}:\oper(n)_k\to \aoper(n)_k$ at arity $n$ and degree $k$ is an epimorphism.
\end{itemize}
\end{thm}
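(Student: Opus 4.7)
The plan is to produce the model structure by Kan transfer from the standard projective model structure on chain complexes through the free--forgetful adjunction between sequences and operads. First I would recall that $\CHP$ carries the projective model structure in which weak equivalences are quasi-isomorphisms, fibrations are morphisms that are surjective in every positive degree, and cofibrations are the corresponding left lifting class; generating sets are the usual cellular inclusions, and the structure is well-known over any commutative ring. Taking a product arity-by-arity, the category of sequences $\SEQ=\prod_{n\geq 0}\CHP$ inherits an arity-wise projective model structure. The forgetful functor $U:\OPER\to\SEQ$ admits a left adjoint $F$, the free non-$\Sigma$ operad functor, defined as a direct sum over planar trees with no symmetric group quotients; this absence of quotients is precisely why one can hope to transfer the structure without imposing any restriction on the characteristic of $\field$.

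Next I would verify Kan's transfer criteria. Smallness is automatic in chain complexes, so the one nontrivial hypothesis reduces to the following statement: for every pushout diagram
\[
\xymatrix{F(A)\ar[r]\ar[d] & F(B)\ar[d] \\ \oper\ar[r] & \aoper}
\]
in $\OPER$ in which $A\to B$ is a generating acyclic cofibration of $\SEQ$, the induced map $\oper\to\aoper$ is an arity-wise quasi-isomorphism. To establish this I would exhibit $\aoper$ as the colimit of a filtration $\oper=\aoper_0\subseteq\aoper_1\subseteq\cdots$ in which $\aoper_k$ is generated by tree monomials containing at most $k$ vertices labelled by `new' elements from $B$. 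The planar-tree calculus identifies each associated graded quotient $\aoper_k/\aoper_{k-1}$ with a direct sum of tensor products over $\field$ of copies of $\oper$ interleaved with $k$ copies of the cofibre $B/A$. Since $B/A$ is acyclic, $\field$ is a field (so every tensor product over $\field$ preserves quasi-isomorphisms), and homology commutes with direct sums, each quotient is acyclic. Hence $\aoper_{k-1}\to\aoper_k$ is an arity-wise quasi-isomorphism for every $k$, and so is the colimit map $\oper\to\aoper$.

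Left properness I would deduce by a parallel filtration argument. Any cofibration $\oper_1\to\oper_2$ in $\OPER$ is, up to retract, an $F$-cellular extension, so its pushout along any weak equivalence $\oper_1\to\aoper_1$ admits a cellular filtration parallel to that of $\oper_2$. At each filtration step the associated graded quotient differs only by replacing $\oper_1$ with $\aoper_1$ inside the tree combinatorics, which amounts to tensoring a weak equivalence with fixed complexes over $\field$. Over a field every such tensor product preserves quasi-isomorphisms, so the comparison is an arity-wise weak equivalence at every stage; taking colimits and then closing under retract delivers the conclusion for arbitrary cofibrations.

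The main obstacle I anticipate is the explicit combinatorial description of the tree filtration on operadic pushouts used in both arguments above; this bookkeeping is genuinely fiddly, but for non-$\Sigma$ operads it has been worked out in detail by several authors (for instance Berger--Moerdijk and Muro), so I would invoke their version of the planar-tree calculus rather than reprove it from scratch. The remaining ingredients --- projectivity of $\CHP$, cofibrant generation, the small object argument, and two-out-of-three --- are all standard and impose no new restrictions on $\field$ or on the arities.
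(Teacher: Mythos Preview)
Your proposal is correct and follows essentially the same route as the paper: transfer via the free--forgetful adjunction $F\dashv U$ between $\SEQ$ and $\OPER$, verification of the acyclicity condition by a planar-tree description of the pushout, and left properness by the filtration on the pushout by the number of ``new'' vertices, with the associated graded pieces identified as tensor products over $\field$. The only cosmetic difference is that the paper handles the generating-acyclic-cofibration case directly from the tree presentation (each nontrivial summand contains an acyclic tensor factor $D^{p,q}$) rather than via the filtration, reserving the filtration argument for left properness.
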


Our notion of a model category is that of Hovey \cite{hovey}.  Recall that a model category $\M$ is said to be \textit{left proper} if a pushout of a weak equivalence by a cofibration is also a weak equivalence. Theorem \ref{Tmodel} is not  new one. For the case of $\Sigma$-operads, existence of a (semi) model category structure was proved first by Hinich \cite{hinich} for chain complexes and later by Spitzweck \cite{spitzweck} and Berger-Moerdijk \cite{BM} for general model categories, and left properness was also proved in \cite{spitzweck, BM}. For the case of non-$\Sigma$ operads, Muro \cite{muro} proved existence of  a model category structure for general model categories. In our simple case the proof is somewhat shorter.  For the reader's convenience we give a proof of Theorem \ref{Tmodel} in section \ref{Smodel}. \par
For a model category $\M$ and a morphism $f:X\to Y\in \M$, a Quillen adjoint pair
\[
\xymatrix{P_f:X/\M\ar[r]<1mm>&Y/\M: U_f\ar[l]<1mm>}
\] 
 between under categories (with the comma model structures, see the paragraph below Proposition 1.1.8 of \cite{hovey}) is defined by $P_f(Z)=Y\cup_XZ$ and $U_f(Z)=$ the composition $X\to Y\to Z$. The following proposition is well-known and can be easily proved using \cite[Corollary 1.3.16]{hovey}.
\begin{prop}\label{Pleftproper}
Under the above notations, if $\M$ is  left proper, for any weak equivalence $f$ the induced adjunction $(P_f,U_f)$ is a Quillen equivalence. In particular, the derived adjunction $(\mathbb{L}P_f,U_f)$ induces an equivalence between the homotopy category.\qed
\end{prop}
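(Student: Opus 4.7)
The plan is to verify Hovey's criterion \cite[Corollary 1.3.16]{hovey} for $(P_f,U_f)$ to be a Quillen equivalence: that $U_f$ reflects weak equivalences between fibrant objects, and that for every cofibrant $Z\in X/\M$ the derived unit is a weak equivalence.

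First I would note that in the comma model structures on $X/\M$ and $Y/\M$, each of cofibrations, fibrations, and weak equivalences is detected on the underlying morphism in $\M$, and $U_f$ leaves the underlying object and morphism alone, only re-routing the structure map via $f$. Consequently $U_f$ preserves and reflects all three classes. This simultaneously establishes that $(P_f,U_f)$ is a Quillen adjunction (through the fibration / trivial-fibration preservation) and verifies the reflection half of the criterion.

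Next I would analyse the unit $\eta_Z : Z \to U_f P_f Z$. An object $Z\in X/\M$ is cofibrant precisely when its structure map $X\to Z$ is a cofibration in $\M$, since the initial object of $X/\M$ is $\mathrm{id}_X$. By construction, $P_f Z$ is the pushout of $f:X\to Y$ along this cofibration $X\to Z$, and the underlying map of $\eta_Z$ in $\M$ is the opposite edge of that pushout square. Left properness of $\M$, applied to the cobase change of the weak equivalence $f$ along the cofibration $X\to Z$, then gives at once that $\eta_Z$ is itself a weak equivalence, with no fibrant replacement needed. Composing with $U_f$ of a fibrant replacement of $P_f Z$ (again a weak equivalence, preserved by $U_f$) shows that the derived unit is a weak equivalence, completing the second half of the criterion.

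Putting the pieces together yields the Quillen equivalence, and the induced equivalence of homotopy categories is then formal (\cite[Proposition 1.3.13]{hovey}). I do not anticipate any genuine obstacle; the only thing to flag is the characterization of cofibrancy in a comma category in terms of its structure map being a cofibration, without which left properness could not be invoked at the crucial step.
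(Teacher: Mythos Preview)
Your argument is correct and is exactly the route the paper has in mind: the paper gives no detailed proof, merely asserting that the result follows easily from \cite[Corollary 1.3.16]{hovey}, and your proposal is a careful verification of the two hypotheses of that corollary, invoking left properness precisely where needed.
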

An operad weak equivalent to $\A$ is called an $A_\infty$-operad. Let $\Ho(\M)$ denote the homotopy category of  a model category $\M$.
\begin{lem}\label{Lhomotopyclass}
(1) Let $\B_0$ and $\B_1$ be two $A_\infty$-operads and  suppose $\B_0$ is cofibrant. There exists a bijection $[\B_0,\B_1]\cong \field^{\times}$. Here, $[-,-]$ denotes the set of (left or right) homotopy classes of  morphisms. If one fixes a morphism $f:\B_0\to \B_1$, the bijection is given by $\field^{\times}\ni a\mapsto a*f\in[\B_0,\B_1]$, $(a*f)_n=a^{n-1}f_n$.\\
\indent (2) Let 
\[
\xymatrix{
\B_0\ar[d]^{f_1}&\B_0\ar[d]^{f_2}\\
\B_1\ar[r]^{\alpha}\ar[d]^{g_1}&\B_2\ar[d]^{g_2}\\
\oper_1\ar[r]^\beta&\oper_2}
\] 
be a commutative diagram in $\OPER$ where $\B_0$, $\B_1$ and $\B_2$ are $A_\infty$-operads, and $\B_0$ is cofibrant, and $\beta$ is a weak equivalence. Then the compositions $g_1\circ f_1$ and $g_2\circ f_2$ are isomorphic as objects of $\Ho(\B_0/\OPER)$.
\end{lem}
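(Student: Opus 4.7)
The plan for (1) is to reduce $[\B_0,\B_1]$ to a direct computation of $\operatorname{Hom}_\OPER(\A,\A)$. First, any operad morphism $\phi\colon\A\to\A$ sends $\mu_n$ to $a_n\mu_n$ for some $a_n\in\field$; associativity of operadic composition forces $a_na_m=a_{n+m-1}$, so $a_n=a_2^{n-1}$; unit preservation gives $a_1=1$; and crucially the identity $\mu_2\circ_1\mu_0=\mu_1$ in $\A$ forces $a_2\cdot a_0=1$, ensuring $a_2\in\field^{\times}$. Thus $\operatorname{Hom}_\OPER(\A,\A)\cong\field^{\times}$. A direct check also shows that $(a*f)_n=a^{n-1}f_n$ defines an operad morphism whenever $f$ is one and $a\in\field^{\times}$.

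Next I would compute $[\B_0,\B_1]$. Since fibrations in $\OPER$ are positive-degree surjections, every object is fibrant; as $\B_0$ is cofibrant, $[\B_0,\B_1]$ is the set of right-homotopy classes. A zigzag of weak equivalences linking $\B_1$ and $\A$, combined with the standard fact that $[\B_0,-]$ sends weak equivalences between fibrant targets to bijections, identifies $[\B_0,\B_1]\cong[\B_0,\A]$. Because $\A$ is concentrated in degree zero, any operad morphism $\B_0\to\A$ factors through $H_0(\B_0)=\A$ and is determined by the induced map on $H_0$; moreover both projections of any path object of $\A$ act as the identity on $H_0$, so right homotopy collapses to equality. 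Hence $[\B_0,\A]=\operatorname{Hom}_\OPER(H_0(\B_0),\A)=\field^{\times}$. If $H_*(f)$ corresponds under this identification to $c\in\field^{\times}$, then $H_*(a*f)(\mu_n)=(ac)^{n-1}\mu_n$ corresponds to $ac$, so $a\mapsto[a*f]$ is the composition of the shift $a\mapsto ac$ with the bijection above, hence itself a bijection.

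For (2), apply (1) to $(\B_0,\B_2)$: both $\alpha\circ f_1$ and $f_2$ are morphisms $\B_0\to\B_2$ between $A_\infty$-operads, so there exists $a\in\field^{\times}$ with $[\alpha\circ f_1]=[a*f_2]$ in $[\B_0,\B_2]$, i.e.\ they are right-homotopic. Composing with $g_2$, $\beta\circ h_1=g_2\circ\alpha\circ f_1$ is right-homotopic to $g_2\circ(a*f_2)=a*h_2$ as morphisms $\B_0\to\oper_2$; a path object for $\oper_2$ supplies a witness $\widetilde H\colon\B_0\to\oper_2^I$ and a zigzag
\[
(\oper_2,\beta\circ h_1)\longleftarrow(\oper_2^I,\widetilde H)\longrightarrow(\oper_2,a*h_2)
\]
of weak equivalences in $\B_0/\OPER$. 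The operad automorphism $\phi_a\colon\oper_2\to\oper_2$ defined by $\phi_a(x)=a^{n-1}x$ on $\oper_2(n)$ (well-defined by the same computation as in (1)) provides a direct isomorphism $(\oper_2,h_2)\to(\oper_2,a*h_2)$ in $\B_0/\OPER$, and $\beta$ itself is a weak equivalence giving $(\oper_1,h_1)\to(\oper_2,\beta\circ h_1)$ in $\B_0/\OPER$. Concatenating these, $h_1\cong h_2$ in $\Ho(\B_0/\OPER)$.

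The main obstacle is the injectivity step in (1): showing that any two right-homotopic operad morphisms $\B_0\to\A$ are actually equal. The degree-zero concentration of $\A$ is essential here, forcing any chain-level datum to be trivial in positive degree and letting both projections of a path object of $\A$ act as the identity on $H_0$. Once this rigidity is secured, the rest of (1) and all of (2) reduce to bookkeeping with the zigzag above.
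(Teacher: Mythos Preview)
Your argument is correct and follows essentially the same route as the paper. For part (1), both you and the paper reduce $[\B_0,\B_1]$ to $[\B_0,\A]$ by homotopy invariance (all objects being fibrant), then identify this with $\operatorname{Hom}_\OPER(\A,\A)\cong\field^\times$ via the unique factorisation of any map $\B_0\to\A$ through $H_*(\B_0)\cong\A$; you supply more detail than the paper on why distinct morphisms to $\A$ cannot be right-homotopic (the path-object argument) and on why $a_2$ must be a unit. For part (2), the paper first treats the special case $\alpha=\beta=\mathrm{id}$ and then reduces the general case to it, whereas you handle everything in one pass; but the two arguments use the same ingredients---part (1) to write one map as $a*$ another up to homotopy, the automorphism $\phi_a(x)=a^{n-1}x$ of the target, and the observation that right-homotopic maps (witnessed by a path-object zigzag) and the weak equivalence $\beta$ give isomorphisms in $\Ho(\B_0/\OPER)$.
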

\begin{proof}
(1) As any object of $\OPER$ is fibrant, by  homotopy invariance of the set of homotopy classes, we may replace $\B_1$ with the associative operad $\A$. As a morphism $f:\B_0\to \A$ uniquely factors through the morphism $\Hom(f):\Hom(\B_0)\to\Hom(\A)\cong \A$, the set $[\B_0,\A]$ is bijective to the set of endomorphisms on $\A$. This latter set is bijective to $\field^{\times}$ since an endomorphism of $\A$ is determined by its image of a generator of $\A(2)$.\par
(2) We shall consider the case where $\alpha$ and $\beta$ are the identities. If $f_1$ and $f_2$ are homotopic, by standard properties of left and right homotopies, $g_1\circ f_1$ and $g_2\circ f_2$ ($=g_1\circ f_2$) are right homotopic.  This  implies the claim by definition. So we may assume $f_2=a*f_1$ for some $a\in\field^{\times}$ by part 1. Define a morphism $\phi_a:\oper_1\to\oper_1$ as $\phi_{a,n}=a^{n-1} :\oper_1(n)\to \oper_1(n)$. Clearly $\phi_a$ is an isomorphism between $g_1\circ f_1$ and $g_2\circ f_2$ in $\B_0/\OPER$, hence in $\Ho(\B_0/\OPER)$.\\
\indent We shall consider the general case. Clearly $g_1\circ f_1$ and $g_2\circ \alpha\circ f_1$ are isomorphic in $\Ho(\B_0/\OPER)$. By applying the above case to $f_2$ and $\alpha \circ f_1$, we get the claim in the general case. 
\end{proof}

\textit{Proof of Theorem \ref{Tmultiplicativeformality}}.\quad By relative formality (Theorem \ref{Trelativeformality}), there exists a commutative diagram in $\OPER$:
\[
\xymatrix
{\A\ar[d]^{g}&\B_1\ar[d]^{g_1}\ar[l]\ar[r]&\cdots&\B_N\ar[d]^{g_N}\ar[l] \ar[r]&  \A\ar[d]^{H_*(g)} \\
C_*(\K_d) &\oper_1\ar[l]\ar[r] &\cdots & \oper_N\ar[l]\ar[r] & H_*(\K_d),
}
\]
where all horizontal arrows are weak equivalences.  Let $f:\B_0\to \A$ be a cofibrant replacement of the associative operad. We can pick a morphism $f_i:\B_0\to \B_i\in\OPER$ for each $i=1,\dots,N$ as each $B_i$ is a  (fibrant) $A_\infty$-operad. So we obtain the following diagram:
\[
\xymatrix
{\B_0\ar[d]^{f}&\B_0\ar[d]^{f_1}&\cdots&\B_0\ar[d]^{f_N} &  \B_0\ar[d]^{f} \\
\A\ar[d]^g&\B_1\ar[d]^{g_1}\ar[l]\ar[r]&\cdots&\B_N\ar[d]^{g_N}\ar[l] \ar[r]&  \A\ar[d]^{H_*(g)} \\
C_*(\K_d) &\oper_1\ar[l]\ar[r] &\cdots & \oper_N\ar[l]\ar[r] & H_*(\K_d).
}
\]

By applying the part 2 of Lemma \ref{Lhomotopyclass} to each part of this diagram, we see $g\circ f$ and $g_1\circ f_1$ and $g_2\circ f_2$ and ... $H_*(g)\circ f$ are all  isomorphic in $\Ho(\B_0/\OPER)$. In other words, $U_f(C_*(\K_d))$ and $U_f(H_*(\K_d))$ are isomorphic in $\Ho(\B_0/\OPER)$. By this and Proposition \ref{Pleftproper} we have isomorphisms  
$C_*(\K_d)\cong \mathbb{L}P_fU_f(C_*(\K_d))\cong\mathbb{L}P_fU_f(H_*(\K_d)))\cong H_*(\K_d)$ in $\Ho(\A/\OPER)$. This implies $C_*(\K_d)$ and $H_*(\K_d)$ can be connected by a chain of weak equivalences under $\A$, which means $C_*(\K_d)$ is multiplicatively formal.\qed
\section{Proof of Theorem \ref{Tmodel}}\label{Smodel}
To prove Theorem \ref{Tmodel}, we use \cite[Theorem 2.1.19]{hovey}. So we need two sets $I$ and $J$ of morphisms of $\OPER$, which play the roles of sets of  generating cofibrations and trivial cofibrations, respectively. To define $I$ and $J$, we use
 the free construction in $\OPER$, and to describe this construction, we shall recall languages of tree.\par 
A \textit{tree} is a finite connected acyclic graph. Let $T$ be a tree and
 $\phi:|T|\to \R\times[0,1]$ be an embedding of
 the geometric realization of $T$ such that $Im(\phi)\cap \R\times\{0\}$ consists of only one vertex (or 0-cell), 
which we call the \textit{root} of $T$ and $Im(\phi)\cap \R\times\{1\}$ consists of univalent vertices. 
Let $n\geq 0$ be an integer and $\alpha:\{1,\dots, n\}\to Im(\phi)\cap\R\times\{1\}$ be an order-preserving monomorphism, where the linear order on $Im(\phi)\cap\R\times\{1\}$ is induced by the usual order on $\R\times\{1\}=\R$. We call a vertex in $Im(\alpha)$ a \textit{leaf} of $T$ and a vertex in $Im(\phi)\cap\R\times\{1\}-Im(\alpha)$ a \textit{null vertex} of $T$.  For an edge $e$ of $T$, the vertex of $e$ further from the root is called the \textit{source} of $e$, and the other is called the \textit{target}. 
\begin{defi}

For each $n\geq 0$ consider isotopy classes of triples $(T,\phi,\alpha)$ which satisfy the above conditions, where an isotopy is assumed to respect the map $\alpha$.  We call such an isotopy class $\{(T,\phi,\alpha)\}$ a \textit{regular planer $n$-tree} if  each vertex in $Im(\phi)\cap \R\times (0,1)$ is at least bivalent.
By abuse of notations, a regular planer $n$-tree is denoted by the same notation as the underlying tree. \par
Let $T$ be a regular planer $n$-tree and $v$ be a vertex of $T$. We define a number $\In(v)$ as $0$ if $v$ is a null vertex, and as the number of the  edges whose targets are $v$ otherwise.  The set of vertices which are not leaves is denoted by $\Vin(T)$. 
The \textit{level} of a vertex $v$   is one less than the number of vertices on the shortest path connecting the root and $v$. For example, the level of the root is 0.   We put
\[
\Vin^0(T)=\{v\in\Vin(T)\mid  \text{ the level of $v$ is even}\},\quad \Vin^1(T)=\Vin(T)-\Vin^0(T).
\]
We say $T$ is \textit{odd} if the level of each vertex in $Im(\alpha)$ is odd. 
Let $\T_n$ (resp. $\T^1_n$) denote the set of all regular planer $n$-trees (resp. odd regular planer $n$-trees).
\end{defi}
\begin{rem}
For each $n\geq 0$, $\T_n$ is bijective to the set of all isomorphism classes of planted planer trees with $n$ leaves defined in \cite[Definition 3.4]{muro}.
\end{rem}
Let $\SEQ$ be the category of sequences in $\CHP$.  An object of $\SEQ$ is a sequence $\seq(0),\seq(1),\dots$ of chain complexes, and  a morphism is a sequence of chain maps. 
The free construction (or free functor) $\F:\SEQ\longrightarrow \OPER$, i.e. the left adjoint of the forgetful functor $\U:\OPER\longrightarrow \SEQ$ is defined by 
\[
\F(\seq)(n)=
\field \cdot \delta_{1,n}\oplus\bigoplus\limits_{T\in\T_n}\bigotimes\limits_{v\in\Vin(T)}\seq(\In(v)).
\]
Here, $\field\cdot \delta_{1,n}$ is the module generated by a formal unit if $n=1$, and the zero module otherwise.\par 
 
To define  the sets of morphisms $I$ and $J$, we shall recall a set of generating (trivial) cofibrations of $\CHP$. For  $p\geq 1$ we define a complex $D^p$ as follows: $D^p_l=\field$ if $l=p,p-1$, $D^p_l=0$ otherwise. The differential $d_p$ is the identity. For $p\geq 0$ we define another complex $S^p$ by $S^p_p=\field$ and $S^p_l=0$ for $l\not=p$. Let $i^p:S^{p-1}\to D^p$ be the chain map which is the identity on $S^{p-1}_{p-1}$, and $j^p:0\to D^p$ be the unique chain map.  Put $I_0=\{i^p\mid p\geq 1\}$, and $J_0=\{j^p\mid p\geq 1\}$. The following proposition is well known and can be proved by a way analogous to the proof of \cite[Theorem 2.3.11]{hovey}.
\begin{prop}\label{Pchain}
$\CHP$ admits a cofibrantly generated model category structure with $I_0$ (resp. $J_0$) being a set of generating cofibrations (resp. trivial cofibrations), where 
\begin{itemize}
\item weak equivalences are  quasi-isomorphisms, 
\item fibrations are those morphisms $f:C\to D$ such that  the linear map $f_k:C_k\to D_k$ is an epimorphism for each $k\geq 1$.\qed
\end{itemize}
\end{prop}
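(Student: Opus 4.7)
The plan is to invoke the recognition theorem for cofibrantly generated model categories (Theorem 2.1.19 of \cite{hovey}). The category $\CHP$ is bicomplete (limits and colimits are computed degree-wise), the class of quasi-isomorphisms satisfies the two-out-of-three property and is closed under retracts, and the domains of $I_0$ and $J_0$ (namely $0$ and the one-term complexes $S^{p-1}$) are small with respect to all of $\CHP$. Hence the small object argument applies to both $I_0$ and $J_0$.

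First, I would compute the two right lifting classes explicitly. A chain map $f\colon C\to D$ has the right lifting property with respect to $J_0$ if and only if $f_k\colon C_k\to D_k$ is an epimorphism for every $k\geq 1$, since a map $D^p\to D$ is the datum of an element $y\in D_p$ and a lift to $C$ is exactly a preimage under $f_p$. This recovers the claimed class of fibrations. Similarly, $f$ has the right lifting property with respect to $I_0$ if and only if, for every $p\geq 1$ and every pair consisting of a cycle $x\in C_{p-1}$ and an element $y\in D_p$ satisfying $dy=f(x)$, there exists $\tilde y\in C_p$ with $d\tilde y=x$ and $f(\tilde y)=y$.

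Next I would show this latter property is equivalent to $f$ being both a fibration and a quasi-isomorphism. Setting $x=0$ shows $f_p$ is surjective, and taking $x$ a cycle with $f(x)$ bounding (respectively taking $x=0$ with $y$ a cycle) shows $f$ is injective (respectively surjective) on homology. For the converse, the main technical step, assume $f$ is a fibration and a quasi-isomorphism, and let $x,y$ be as above. Since $f(x)=dy$ is a boundary and $f$ is a quasi-isomorphism, $x$ bounds in $C$, say $x=dz$. The element $f(z)-y$ is then a cycle in $D_p$; by the quasi-isomorphism its class equals $[f(w)]$ for some cycle $w\in C_p$, so $f(z-w)-y=du$ for some $u\in D_{p+1}$. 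Lifting $u$ to $\tilde u\in C_{p+1}$ by the fibration property, one checks that $\tilde y:=z-w+d\tilde u$ satisfies $f(\tilde y)=y$ and $d\tilde y=x$.

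It remains to verify the two cell-complex conditions of the recognition theorem. A pushout of $j^p\colon 0\to D^p$ along $0\to X$ is the inclusion $X\to X\oplus D^p$, which is a quasi-isomorphism since $D^p$ is acyclic; transfinite compositions of such inclusions remain quasi-isomorphisms because homology commutes with filtered colimits. That each $j^p$ lies in $I_0$-cof follows from a direct lifting check against trivial fibrations using surjectivity in degree $p$. Assembling these pieces into the recognition theorem produces the stated model structure; the only genuinely nontrivial computation is the construction of $\tilde y$ above, as it is the step that activates both the fibration and quasi-isomorphism hypotheses simultaneously.
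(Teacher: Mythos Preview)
Your approach is exactly what the paper intends: it merely declares the result well known and points to the proof of \cite[Theorem~2.3.11]{hovey}, which proceeds via the recognition theorem \cite[Theorem~2.1.19]{hovey} just as you do.

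There is, however, a genuine gap in your forward implication ``$f\in I_0\text{-inj}\Rightarrow f$ is a trivial fibration''. Setting $x=0$ forces $dy=0$, so the lifting property only shows that $f_p$ surjects onto the \emph{cycles} of $D_p$, not onto all of $D_p$; and nothing in your argument yields surjectivity of $f_*$ on $H_0$. Neither conclusion can be drawn from the paper's $I_0=\{i^p\mid p\geq 1\}$ as written. Concretely, the map $0\to D^1$ has the right lifting property against every $i^p$ (in each commuting square the bottom arrow $D^p\to D^1$ is forced to be zero), yet $f_1$ is not surjective; and $0\to S^0$ likewise lies in $I_0$-inj but is not a quasi-isomorphism. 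The standard remedy is to enlarge $I_0$ by the map $0\to S^0$ (equivalently, allow $p\geq 0$ with the convention $D^0=S^0$, $S^{-1}=0$). With that extra generator $f_0$ is forced to be surjective, your bootstrap then gives surjectivity of every $f_p$, the $H_0$ issue disappears, and the rest of your outline goes through. (There is also a harmless sign slip: from $f(z-w)-y=du$ one needs $\tilde y=z-w-d\tilde u$ rather than $+\,d\tilde u$.)
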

Let $D^{p,q}$ and $S^{p,q}$ be two objects of $\SEQ$ defined by $D^{p,q}(q)=D^p$, $D^{p,q}(n)=0$ for $n\not=q$, and $S^{p,q}(q)=S^p$, $S^{p,q}(n)=0$ for $n\not=q$, respectively.   
Let $i^{p,q}:S^{p-1,q}\to D^{p,q}$ and $j^{p,q}:0\to D^{p,q}$ be the morphisms induced by $i^p$ and $j^p$ respectively. Put $I_1=\{i^{p,q}\mid p\geq 1, q\geq 0\}$ and $J_1=\{j^{p,q}\mid p\geq 1,q\geq 0\}$.\par
 We define two sets of morphisms $I$ and $J$ as the image of $I_1$ and $J_1$ by $\F$, respectively.\par
\vspace{\baselineskip}

\textit{Proof of Theorem \ref{Tmodel}.}  We apply   \cite[Theorem 2.1.19]{hovey} to the sets $I$ and $J$ defined above and the class of weak equivalences given in Theorem \ref{Tmodel}. We must verify the six conditions stated in the theorem.  The first condition (2-out-of-3 and closedness under retraction of $\W$) is clear. The second and third conditions (smallness of the domains of $I$ and $J$) follow from \cite[Lemma 2.3.2]{hovey} and adjointness of the pair $(\F,\U)$. By adjointness, the class $I$-$inj$ (resp. $J$-$inj$) is equal to the class of morphisms $f:\oper\to\oper'$ such that $\U(f):\U(\oper)\to\U(\oper')$ is $I_1$-$inj$ (resp. $J_1$-$inj$). This and Proposition \ref{Pchain} imply the  fifth and sixth  conditions ($I$-$inj\subset\W\cap J$-$inj$ and $\W\cap J$-$inj\subset I$-$inj$).\\
\indent We shall prove the forth condition ($J$-$cell\subset \W\cap I$-$cof$). $J$-$cell\subset I$-$cof$ is clear by adjointness. To prove $J$-$cell\subset \W$, as quasi-isomorphisms are closed under transfinite composition, it is enough to prove a pushout by a morphism in $J$ is in $\W$. Take a morphism  $\F(j^{p,q}):\F(0)\to\F(D^{p,q})\in J$ and an operad $\oper$. The pushout $\aoper=\F(D^{p,q})\cup_{\F(0)}\oper $ ($=\F(D^{p,q})\sqcup \oper$) has the following presentation.
\[
\aoper(n)=\bigoplus\limits_{T\in\T_n^1} \left(\bigotimes\limits_{v\in \Vin^0(T)}
\oper(\In(v))\otimes\bigotimes\limits_{v\in \Vin^1(T)}D^{p,q}(\In(v))\right).
\]
(In this presentation, the unit of $\oper$ serves as the unit of $\aoper$, and for $x,y\in D^{p,q}$, a composition $x\circ_iy$ is equal to $((1\circ x\circ_i1)\circ_i y)\circ 1^{\otimes m}$ for some $m$, so we do not need 'even' trees or other partitions of the set $\Vin(T)$.) As the tensor product over a field preserves quasi-isomorphisms, we see the pushout morphism $\oper\to \aoper$ is in the class $\W$ from this presentation. \\
\indent We shall prove left properness. As any cofibration is a retract of a relative $I$-cell, it is enough to prove a pushout by a generating cofibration preserves weak equivalences. Take a morphism $\F(i^{p,q}):\F(S^{p-1,q})\to\F(D^{p,q})$. Let $f:\oper\to \oper'$ be a weak equivalence and $g:\F(S^{p-1,q})\to \oper $ be a morphism. As a graded operad, i.e., if we forget the differentials, the pushout ${\taoper}_{\oper}=\oper\cup_{\F(S^{p-1,q})}\F(D^{p,q})$ has a presentation analogous to the above presentation of $\aoper$. It is given by replacing $D^{p,q}$ with $S^{p,q}$ in the above one.
By the Leipnitz rule, the differential is determined by its restrictions to $\oper$  and to $S^{p,q}$. On $\oper$ it is equal to the original differential of $\oper$, and on $S^{p,q}$ it is given by the composition $S^{p,q}(q)_p=D^p_p\xrightarrow{d^p}D^{p}_{p-1}=S^{p-1,q}(q)_{p-1}\xrightarrow{g}\oper(q)_{p-1}$. What we have to prove is that the induced morphism $\taoper_{\oper}\to \taoper_{\oper'}$ is a weak equivalence. For each $l\geq 0$ let $F_\oper^l\subset \taoper_{\oper}$ be the subsequence which is spanned by the summands corresponding to regular planer trees with $\sharp\Vin^1(T)\leq l$. As $F_\oper^{l+1}(n)/F_\oper^l(n)$ is isomorphic to a sum of  tensors of $\oper(m)$'s and $S^p$'s (as chain complexes), the induced morphism $F_\oper^{l+1}(n)/F_\oper^l(n)\to F_{\oper'}^{l+1}(n)/F_{\oper'}^l(n)$ is a quasi-isomorphism. By an inductive argument using long exact sequence, we see the morphism $F_\oper^{l}(n)\to F_{\oper'}^{l}(n)$ is a quasi-isomorphism for each $l\geq 0$. As $\taoper_\oper=\cup_{l}F_\oper^l$, we obtain the desired claim.
\qed \\

\textbf{Acknowledgement.} The author is grateful to Masana Harada for many valuable comments to improve presentations of the paper and other mathematical and non-mathematical supports.  He is also grateful to Keiichi Sakai for kindly answering his questions, telling him about the paper \cite{salvatore} and giving some valuable comments for the first version of this paper.
{\small

}
{\sc Department of Mathematics, Kyoto University, 606-8502, Japan}\\
\textit{E-mail adress} : {\tt moriyasy@math.kyoto-u.ac.jp}

\end{document}